\numberwithin{equation}{section}
\newtheorem{Definition}{Definition}[section]
\newtheorem{theorem}[Definition]{Theorem}
\begin{document}
\title{\Large \bf On the power graphs which are Cayley graphs of some groups}
\author{S. Mukherjee and A. K. Bhuniya}
\date{}
\maketitle

\begin{center}
Department of Mathematics, Visva-Bharati, Santiniketan-731235, India. \\
shyamal.sajalmukherjee@gmail.com and anjankbhuniya@gmail.com
\end{center}

\begin{abstract}
In $2013$, Jemal Abawajy, Andrei Kelarev and Morshed Chowdhury \cite{Kelarev} proposed a problem to characterize  the finite groups whose power graphs are Cayley graphs of some groups. Here we give a complete answer to this question.
\end{abstract}
\textbf{Keywords:} power graphs, Cayley graphs, regular graphs, vertex transitive graphs, cyclic p-groups.
\\ \textbf{2010 Mathematics Subject Classification:} 05C25, 05C07, 05C20.

\section{Introduction}
Let $G$ be a finite group. The concept of directed power graph $\mathcal{\overrightarrow{P}}(G)$ was introduced by Kelarev and Quinn \cite{kelarev 2} . $\mathcal{\overrightarrow{P}}(G)$ is a digraph with the vertex set $G$ and for $x,y \in G $, there is an arc from $x$ to $y$ if and only if $x \neq y$ and $y = x^m$ for some positive integer $m$. Then Chakraborty, Ghosh and Sen \cite{sen} defined undirected power graph $\mathcal{P}(G)$ of a group $G$, which is a simple graph having the same vertex set as $\mathcal{\overrightarrow{P}}(G)$ and two distinct vertices in $G$ are adjacent if and only if either of them is a positive power of the other. The undirected power graph $\mathcal{P}(G)$ of a finite group $G$ is complete if and only if $G$ is a cyclic $p-$group [Theorem 2.12, \cite{sen}]

Cayley graph is another and widely studied graph, associated with finite groups. Let $G$ be a group and $C$ be a subset of $G\setminus \{e\}$. Then the directed Cayley graph $\mathcal{\overrightarrow{X(G,C)}}$ is defined to be a directed graph with vertex set $G$ and arc set $\{(g,h):g^{-1}h \in C \}$. If in addition, $C$ is an inverse closed subset, then the undirected Cayley graph $X(G,C)$ is defined to be the underlying undirected graph of $\mathcal{\overrightarrow{X(G,C)}}$. We refer to \cite{Godsil} for more on Cayley Graph of a group.

In the excellent survey \cite{kelarev 2} on power graph of groups and semigroups, Abawajy et. al. proposed the following problem:
\begin{quote}
\textit{Describe all directed and undirected power graphs of groups and semigroups that can be represented as Cayley graphs and, respectively, undirected Cayley graphs of groups or semigroups.}
\end{quote}

Here we describe all groups whose power graphs can be represented as Cayley graphs of some groups, both directed and undirected.

\section{Main result}
An undirected graph $\Gamma$ is called regular if degree of each vertex is the same and $\Gamma$ is called vertex transitive if its automorphism group acts transitively on the vertex set of $\Gamma$. Every Cayley graph is vertex transitive and every vertex transitive undirected graph $\Gamma$ is regular. If $\Gamma$ is directed vertex transitive graph then the in-degree of all the vertices are equal and the same holds for out-degree. Thus we have:
\begin{theorem}
\begin{enumerate}
\item
Let $G$ be a finite group.Then $\mathcal{P}(G)$ is a Cayley graph If and only if $G$ is a cyclic $p-$group.
\item
There is no finite group $G$, whose directed power graph is a Cayley graph.
\end{enumerate}
\end{theorem}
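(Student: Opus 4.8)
The plan hinges on the key fact already stated: $\mathcal{P}(G)$ is complete if and only if $G$ is a cyclic $p$-group. My strategy is to exploit the tension between the strong regularity forced by the Cayley/vertex-transitive condition and the degree irregularity that the power graph of a non-cyclic-$p$-group must exhibit.

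\textbf{Proof proposal for part (1).} For the easy direction, I would observe that if $G$ is a cyclic $p$-group then $\mathcal{P}(G)$ is complete by the cited Theorem 2.12 of \cite{sen}. A complete graph $K_n$ is itself a Cayley graph: take any group $H$ of order $n$ (for instance a cyclic group) with connection set $C = H \setminus \{e\}$, which is inverse-closed; then $X(H,C) \cong K_n$. Hence $\mathcal{P}(G)$ is a Cayley graph. For the hard (forward) direction, I would argue the contrapositive: suppose $G$ is \emph{not} a cyclic $p$-group. Since every Cayley graph is vertex transitive and every vertex-transitive undirected graph is regular, it suffices to show that $\mathcal{P}(G)$ is \emph{not} regular. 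The identity element $e$ is a power of every other element, so $e$ is adjacent to all $n-1$ other vertices, giving $\deg(e) = n-1$ where $n = |G|$. A graph on $n$ vertices with a vertex of degree $n-1$ is regular if and only if it is complete. So if $\mathcal{P}(G)$ were regular it would be complete, whence $G$ would be a cyclic $p$-group by Theorem 2.12 of \cite{sen}, contradicting our assumption. Therefore $\mathcal{P}(G)$ is non-regular, hence not vertex transitive, hence not a Cayley graph.

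\textbf{Proof proposal for part (2).} For the directed statement I would use the analogous degree obstruction, but now in terms of in- and out-degrees. If a directed graph is a Cayley graph it is vertex transitive, so all vertices share a common in-degree and a common out-degree. In $\mathcal{\overrightarrow{P}}(G)$ there is an arc from $x$ to $y$ precisely when $y = x^m$ for some positive integer $m$ with $y \neq x$. The identity $e$ satisfies $e = x^{|x|}$ for every $x$, so every vertex has an arc into $e$; thus $e$ has in-degree $n-1$. On the other hand $e = e^m$ for all $m$, so $e$ has out-degree $0$. If the digraph were vertex transitive, every vertex would have out-degree $0$, forcing the arc set to be empty, which is absurd for $n \geq 2$ (any generator has positive out-degree). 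For $n = 1$ the graph is a single vertex and I would check the degenerate case separately, noting the connection set of a Cayley graph is required to avoid $e$, so the trivial group also fails to realize its power digraph as a genuine Cayley graph. Hence no finite group has its directed power graph representable as a Cayley graph.

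\textbf{Main obstacle.} The genuinely substantive content is entirely supplied by the quoted completeness criterion (Theorem 2.12 of \cite{sen}); once that is invoked, both parts reduce to the elementary but crucial observation that the identity is a universal vertex in the undirected case and a universal \emph{sink} in the directed case. The only point demanding care is the logical step ``a vertex of full degree forces regularity to coincide with completeness,'' together with handling small or degenerate orders so that the degree-counting argument is not vacuous; I expect these to be routine rather than hard.
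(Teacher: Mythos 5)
Your proposal is correct and takes essentially the same route as the paper: Cayley $\Rightarrow$ vertex transitive $\Rightarrow$ regular (respectively, constant in- and out-degrees), the identity is a universal vertex (respectively, a sink with in-degree $n-1$ and out-degree $0$), and completeness is equivalent to $G$ being a cyclic $p$-group via Theorem 2.12 of \cite{sen}. One minor caveat: your dismissal of the $n=1$ case is actually backwards---under the paper's definition, which permits an empty connection set $C \subseteq H\setminus\{e\}$, the one-vertex power digraph of the trivial group \emph{is} a Cayley graph (a degenerate case the paper itself silently ignores), so excluding it is a matter of convention rather than a consequence of ``$C$ avoids $e$.''
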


\begin{proof}
1. \; Suppose $G$ is a cyclic $p-$group, then the undirected power graph $\mathcal{P}(G)$ is complete and hence is a Cayley graph.

Conversely, assume that $\mathcal{P}(G)$ is a Cayley graph of some group. then $\mathcal{P}(G)$ is vertex transitive and so regular. If the order of $G$ is $n$ and $e$ is the identity element of $G$, then degree of $e$ is $n-1$ in $\mathcal{P}(G)$ and hence degree of $v$ is $n-1$ for every $v$ in $G$. It follows that $\mathcal{P}(G)$ is complete. Therefore $G$ is a cyclic $p-$group.
\\ 2. \; If possible, on the contrary, assume that $\mathcal{\overrightarrow{P}}(G)$ is a directed Cayley graph for some group $G$ of order $n$. Then $\mathcal{\overrightarrow{P}}(G)$ is vertex transitive and hence in-degree of all the vertices are equal and the same holds for out-degree. Now in $\mathcal{\overrightarrow{P}}(G)$, out-degree of $e$ is $0$ and in-degree of $e$ is $n-1$. Hence it follows that the out-degree of each vertex is $0$ and the in-degree of each vertex is $n-1$, which is impossible. Therefore, there is no finite group $G$ such that the directed power graph $\mathcal{\overrightarrow{P}}(G)$ is Cayley graph of some group.
\end{proof}

\bibliographystyle{amsplain}

\end{document}